\documentclass[11pt]{article}
\usepackage{epic,latexsym,amssymb}
\usepackage{amssymb,amscd,amsmath,amsfonts,amsthm}
\usepackage{color}
\usepackage{tikz}
\usepackage{comment}
\usepackage{lineno}
\usepackage{pstricks,pst-node,pst-plot}
\usepackage[ruled,vlined,linesnumbered]{algorithm2e}
\usetikzlibrary{patterns}
\usepackage{geometry,graphicx}

\usepackage{tikz,multicol,bm}

\usepackage{enumerate}
\usepackage[ruled,vlined,linesnumbered]{algorithm2e}

\usepackage[english]{babel}
\usepackage{diagbox,comment,xparse}

\usetikzlibrary{calc}

\def\cp{\,\square\,}

\usepackage{amsmath}

\textwidth=6.5in
\textheight=8.5in
\evensidemargin=0in
\oddsidemargin=0in
\topmargin=0in
\topskip=0pt
\baselineskip=12pt
\parskip=4pt
\parindent=1em

\newtheorem{theorem}{Theorem}
\newtheorem{lemma}[theorem]{Lemma}

\newtheorem{proposition}[theorem]{Proposition}

\newtheorem{remark}{Remark}

\newcommand{\2}{ \vspace{0.2cm} }
\newcommand{\1}{ \vspace{0.1cm} }

\let\oldenumerate\enumerate
\renewcommand{\enumerate}{
  \oldenumerate
  \setlength{\itemsep}{0pt}
  \setlength{\parskip}{0pt}
  \setlength{\parsep}{0pt}
}

\begin{document}


\title{On polluted bootstrap percolation in Cartesian grids}

\author{Bo\v{s}tjan Bre\v{s}ar$^{a,b}$, Jaka Hed\v zet$^{a,b}$, \, and \,  Michael A. Henning$^{c}$ \\ \\
$^a$ Faculty of Natural Sciences and Mathematics \\
University of Maribor, Slovenia \\
$^b$ Institute of Mathematics, Physics and Mechanics, \\
Ljubljana, Slovenia\\ 
\small \tt Email: bostjan.bresar@um.si \\
\small \tt Email: jaka.hedzet@imfm.si \\
\\
$^{c}$ Department of Mathematics and Applied Mathematics \\
University of Johannesburg, South Africa\\
\small \tt Email: mahenning@uj.ac.za
}

\date{}
\maketitle

 \begin{abstract}
 \noindent
 Given a graph $G$ and assuming that some vertices of $G$ are infected, the $r$-neighbor bootstrap percolation rule makes an uninfected vertex $v$ infected if $v$ has at least $r$ infected neighbors. The $r$-percolation number, $m(G, r)$, of $G$ is the minimum cardinality of a set of initially infected vertices in $G$ such that after continuously performing the $r$-neighbor bootstrap percolation rule each vertex of $G$ eventually becomes infected. In this paper, we continue the study of polluted bootstrap percolation introduced and studied by Gravner and McDonald [Bootstrap percolation in a polluted environment. J.\ Stat\ Physics 87 (1997) 915--927] where in this variant some vertices are permanently in the non-infected state. We study an extremal (combinatorial) version of the bootstrap percolation problem in a polluted environment, where our main focus is on the class of grid graphs, that is, the Cartesian product $P_m \cp P_n$ of two paths $P_m$ and $P_n$ on $m$ and $n$ vertices, respectively. Given a number of polluted vertices in a Cartesian grid we establish a closed formula for the minimum $2$-neighbor bootstrap percolation number of the polluted grid, and obtain a lower bound for the other extreme.  
 \\[2mm]
 {\bf Keywords:} bootstrap percolation, grid, polluted environment, vertex deleted subgraph.\\[2mm]
 {\bf 2020 Mathematics Subject Classification:} 05C35, 05C76.
 \end{abstract}

\baselineskip=0.20in

\section{Introduction}

Bootstrap percolation originated in the work of Chalupa, Leath and Reich in the context of physics of ferromagentism~\cite{cha-1979}. The process involving bootstrap percolation seems to be ubiquitous, since it is also closely related to cellular automata~\cite{mor-2017}, weak saturation~\cite{bol-1968} as well as to the process of obtaining the $P_3$-convex hull~\cite{cen-2010} to mention a few well known contexts in which it appears.
Given an integer $r \ge 2$, the \emph{$r$-neighbor bootstrap percolation} process is an update rule in a given graph $G$, where vertices have two possible states: \emph{infected} (black) or \emph{uninfected} (white). From an initial set of infected vertices, an uninfected vertex with at least $r$ infected neighbors becomes infected, while infected vertices never change their state;  the central question is whether all vertices of $G$ become infected eventually when this rule is continuously applied.

Bootstrap percolation was studied in arbitrary graphs and in various graph classes, while arguably the most popular environment for studying this process are Cartesian grid networks and similar product-like structures; see~\cite{BP-1998} for an early study of spreading random disease in square grids. In majority of these studies a parameter $p$ describing the probability that vertices are initially infected is involved, but one can consider also the extremal version of the problem; for instance, see~\cite{Bol-2006}. In this problem, one looks for a smallest {\em $r$-percolating set} in a graph $G$, which is a set of vertices in $G$ whose initial infection results in all vertices of $G$ becoming infected after the process is finished. For a given graph $G$ and integer $r \ge 2$, the \emph{$r$-percolation number} of $G$, denoted $m(G,r)$, is the minimum cardinality of an $r$-percolating set in $G$, that is,
\[
m(G,r)=\min \left\lbrace |S| \colon \, S \subseteq V(G), \: \mbox{$S$ is an $r$-percolating set in $G$} \right\rbrace.
\]
(A \emph{minimum $r$-percolating set} in $G$ is a $r$-percolating set $S$ of $G$ satisfying $m(G,r) = |S|$.) For the Cartesian (square) grid $G$, the problem of determining $m(G,2)$ is part of folklore; see Bollob\'{a}s~\cite{BP-1998} from 1998.  Anyway, several questions concerning $m(G,r)$ remain open, in particular for $r>2$, and for grids of bigger dimensions than $2$; see recent studies~\cite{dnr-2023,hh-2025}; see also a recent study for bounding the threshold for critical probability for the case $r=2$ in the square grid~\cite{hm-2019}.

Several variations of bootstrap percolation were considered in the literature. In this note, we are interested in the polluted bootstrap percolation, as introduced by Gravner and McDonald~\cite{GM} and studied further in~\cite{GH-2009,ghls-2025}. The additional assumption in this variant is that some vertices of a graph are always stuck in the non-infected state. Thus, to every vertex of a graph two parameters, $p$ and $q$, are assigned, describing  probabilities of infection and pollution of its vertices, respectively. The authors in~\cite{GM}
investigate constants $C$, which under the condition $q<Cp^2$ when $p\rightarrow 0$ result in the probability that all vertices of a graph are eventually (not) infected. 

In this note, we propose an extremal (combinatorial) version of the bootstrap percolation problem in polluted environment.
Let the vertices of a graph $G$, as observed in polluted environment, assume one of the three states: infected, uninfected or \emph{polluted}. If a vertex $v \in V(G)$ is polluted, then it is always in the uninfected state, even if it has enough infected neighbors to satisfy the $r$-percolation rule.
From the definitions it readily follows that $m(G,r)=m(G-v,r)$ for any polluted vertex $v$. More generally, if $A\subset V(G)$ is the set of polluted vertices in $G$, then $m(G,r)=m(G-A,r)$. In other words, considering polluted bootstrap percolation in $G$ can be translated to bootstrap percolation in the induced subgraph of $G$, where polluted vertices have been removed from $G$.

When considering the $r$-bootstrap percolation number of a graph $G$, some vertices that must be included in the initial infection do not contribute much to the spread of the infection (for instance, such vertices are leaves in a tree). The removal of such vertices could therefore reduce the minimum cardinality of a percolating set. On the other hand, vertices of large degree contribute abundantly to the spread of infection; therefore, their removal likely increases the number of vertices needed to infect $G$. Thus, a natural question arises. Given $k\in \mathbb{N}$  and assuming that $G$ has $k$ polluted vertices, what are the largest and the smallest possible value of $m(G,r)$? More precisely, for fixed integers $k\in \mathbb{N}$ and $r \ge 2$ we are looking for the values of
$$m_k^{\min}(G,r)=\min\{m(G-A,r): \, A \subseteq V(G)\, \wedge \, |A|=k \}$$ and $$m_k^{\max}(G,r)=\max\{m(G-A,r): \, A \subseteq V(G)\, \wedge \, |A|=k \}.$$
Our main focus is on the above two invariants when $G$ is the square grid, that is, the Cartesian product of two paths, and when $r=2$. The following is our main result:

\begin{theorem}
\label{thm:main-grid}
    If $G=P_m \square P_n$ is the grid of size $m \times n$ where $2\le n \le m$, then
    \[
        m_k^{\min}(G,2) =
        \begin{cases}
          \left\lceil\frac{n+m-\left\lfloor\frac{k}{n}\right\rfloor}{2}\right\rceil, & \text{if } 1\le k \le (m-n)n; \2 \\
          \left\lceil\frac{\left\lceil 2\sqrt{mn-k}\right\rceil}{2}\right\rceil, & \text{if } (m-n)n\le k \le mn.
        \end{cases}
    \]
\end{theorem}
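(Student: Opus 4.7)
The plan is to prove the two inequalities $m_k^{\min}(G,2)\ge\text{RHS}$ and $m_k^{\min}(G,2)\le\text{RHS}$ separately, using a perimeter argument for the lower bound and explicit constructions for the upper bound.

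For the lower bound I apply the classical perimeter-monotonicity of $2$-bootstrap percolation, viewing $V(G)$ as a subset of $\mathbb{Z}^2$. For a finite $T\subseteq\mathbb{Z}^2$ let $p(T)$ denote the number of edges of $\mathbb{Z}^2$ with exactly one endpoint in $T$. Whenever a vertex $v$ is added to the infected set it has at least two already-infected neighbors in $G-A$, so the perimeter changes by at most $4-2\deg_I(v)\le 0$; hence $p$ is non-increasing throughout the percolation, and for any percolating set $S\subseteq V(G)\setminus A$ in $G-A$ we have $4|S|\ge p(S)\ge p(V(G)\setminus A)$, giving $m(G-A,2)\ge \lceil p(V(G)\setminus A)/4\rceil$. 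I then bound $p(V(G)\setminus A)$ via an isoperimetric inequality in the rectangle: if $T\subseteq V(G)$ meets $\alpha$ rows and $\beta$ columns of $G$, then $p(T)\ge 2\alpha+2\beta$ (each non-empty row contributes at least two horizontal perimeter edges at its extremes, and likewise for columns), while $\alpha\beta\ge|T|$ since $T$ fits into an $\alpha\times\beta$ sub-rectangle. Setting $N:=mn-k$ and minimizing $\alpha+\beta$ over integers $\alpha\le m$, $\beta\le n$, $\alpha\beta\ge N$: when $k\le(m-n)n$ one has $N\ge n^2$, the constraint $\beta\le n$ binds, and the minimum is $\beta=n,\ \alpha=\lceil N/n\rceil=m-\lfloor k/n\rfloor$, yielding $p\ge 2(m+n-\lfloor k/n\rfloor)$; when $k\ge(m-n)n$, an AM--GM bound gives $\alpha+\beta\ge 2\sqrt{\alpha\beta}\ge\lceil 2\sqrt{N}\rceil$, hence $p\ge 2\lceil 2\sqrt{mn-k}\rceil$. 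Dividing by $4$ and taking the ceiling reproduces both branches of the formula.

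For the upper bound I exhibit explicit polluted sets $A$ with matching $2$-percolating sets. When $k\le(m-n)n$, with $j=\lfloor k/n\rfloor$ and $\ell=k-jn$, pollute the top $j$ rows of $G$ together with the $\ell$ leftmost vertices of row $j+1$; then $G-A$ is the sub-grid $P_{m-j}\cp P_n$ with $\ell$ corner vertices removed, and a standard diagonal $2$-percolating set of $P_{m-j}\cp P_n$ of size $\lceil(m-j+n)/2\rceil$, shifted so as to avoid the removed corner, still percolates $G-A$, because the removed corner vertices need not be infected and the vertex bordering them has both of its remaining neighbors in $G-A$ infected by the standard diagonal spread. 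When $k\ge(m-n)n$, let $s=\lceil\sqrt{mn-k}\rceil\le n$; pollute every vertex outside some $s\times s$ sub-grid of $G$, and additionally pollute $s^2-(mn-k)$ corner vertices of that sub-grid so that $|A|=k$. A standard diagonal set of $P_s\cp P_s$ of size $s$, adjusted to avoid the removed corners, $2$-percolates the remaining region, giving $m(G-A,2)\le s=\lceil\lceil 2\sqrt{mn-k}\rceil/2\rceil$.

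The main obstacle will be the careful verification of the upper bound constructions when corner vertices of the relevant sub-grid have been polluted. The standard diagonal percolating set typically starts at a corner, so upon its removal one must replace that element or shift it by one or two positions, and in a couple of edge cases (for instance $\ell=n-1$ in the first regime, where the unique non-polluted vertex of row $j+1$ has degree $1$ in $G-A$ and must be included in the initial set) the set must be further modified. A case analysis on $\ell$ and on the number and configuration of missing corners is therefore required; one must also verify that both formulas agree at the transition value $k=(m-n)n$, where both evaluate to $n$, and that the required parameters ($j\le m-n$ and $s\le n$) are indeed satisfied in their respective regimes.
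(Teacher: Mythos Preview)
Your proposal is correct and follows essentially the same two-part strategy as the paper: a perimeter argument for the lower bound and explicit polluted-set constructions for the upper bound. Your lower bound is in fact slightly more streamlined than the paper's—bounding $p(T)\ge 2\alpha+2\beta$ via the rows and columns that $T$ meets and then optimising $\alpha+\beta$ under $\alpha\beta\ge N$ replaces the paper's separate Propositions on minimum-perimeter polyominoes (their Propositions~\ref{prop:x^2+r} and~\ref{prp:xy+r}) by a single inequality. Your upper bound constructions are equivalent to the paper's: where you describe removing $s^2-N$ ``corner'' vertices from an $s\times s$ subgrid (with $s=\lceil\sqrt{N}\rceil$), the paper instead builds up from an $x\times x$ square (with $x=\lfloor\sqrt{N}\rfloor$) by adding a partial extra row and/or column—these yield the same shapes. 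The edge case you single out ($\ell=n-1$, leaving a degree-$1$ vertex) is exactly the one the paper handles automatically by placing its percolating path $C$ along the first row and first column, on the side \emph{opposite} the polluted corner, and by including the endpoint $(m-\ell,1)$ in $S$ whenever parity demands it; adopting that placement avoids the ``shift to miss the corner'' manoeuvre you anticipate needing.
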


\section{Preliminary results}

In this section, we present some preliminary results on $r$-neighbor bootstrap percolation in graphs with minimum degree at least~$r$.

\begin{lemma}\label{lem:G-v}
    If $G$ is a graph with $\delta(G) \ge r$, then $m(G-x,r) \ge m(G,r)$ for any $x \in V(G)$.
\end{lemma}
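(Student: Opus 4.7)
The plan is to prove the inequality by a direct monotonicity argument: take any $r$-percolating set of $G-x$ and show that the same set already $r$-percolates in $G$. Let $S\subseteq V(G)\setminus\{x\}$ be a minimum $r$-percolating set of $G-x$, so $|S|=m(G-x,r)$. I will use $S$ as the initial infection in $G$ and track the percolation.

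The key observation is that for every vertex $v\ne x$, the neighborhood of $v$ in $G-x$ is contained in the neighborhood of $v$ in $G$. Consequently, any legal percolation step performed in $G-x$ (an uninfected vertex acquiring at least $r$ infected neighbors among $V(G)\setminus\{x\}$) is still a legal percolation step in $G$. By induction on the time step at which each vertex is infected in the process on $G-x$, every vertex of $V(G)\setminus\{x\}$ becomes infected in the process on $G$ started from $S$.

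Once $V(G)\setminus\{x\}$ is entirely infected in $G$, the hypothesis $\delta(G)\ge r$ enters: vertex $x$ has $\deg_G(x)\ge r$ neighbors in $G$, and all of them are now infected, so $x$ itself becomes infected in the next step. Therefore $S$ is an $r$-percolating set of $G$, which gives $m(G,r)\le |S|=m(G-x,r)$ and completes the proof.

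The argument is essentially routine; the only substantive point is recognizing where the minimum-degree hypothesis is used. It is needed precisely at the final step to guarantee that the removed vertex $x$ can be re-infected in $G$ after all other vertices have been infected; without $\delta(G)\ge r$, the conclusion can fail (for example if $x$ has fewer than $r$ neighbors in $G$).
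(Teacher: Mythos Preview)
Your proof is correct and follows essentially the same approach as the paper: take a minimum $r$-percolating set $S$ of $G-x$, observe that it percolates all of $V(G)\setminus\{x\}$ in $G$ as well, and then use $\deg_G(x)\ge r$ to infect $x$. The only cosmetic difference is that the paper phrases the argument by contradiction, whereas you give it directly (and spell out the neighborhood-containment reason a bit more explicitly).
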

\begin{proof}
    Let $\delta(G) \ge r$ and let $x \in V(G)$. Suppose that $m(G-x,r) < m(G,r)$ and let $S$ be a minimum $r$-percolating set of $G-x$. Now, consider the graph $G$ and let the vertices of $S$ be infected. Then eventually $S$ infects the whole $G-x$, after which $x$ also gets infected, since $\deg_G(x) \ge r$. Thus, $S$ is an $r$-percolating set of $G$ of size $|S| < m(G,r)$, a contradiction.
\end{proof}

To see that the condition $\delta(G) \ge r$ in Lemma~\ref{lem:G-v} is necessary,
let $r=2$ and let $G$ be the complete bipartite graph $K_{1,k}$ for some $k \ge 3$. Then $m(G,r)=k$, since all leaves must be initially infected. By removing any leaf $v$, we get $m(G-v,r)=k-1$, as the remaining leaves still form a minimum $r$-percolating set.

From Lemma~\ref{lem:G-v} we infer the following result.

\begin{theorem}\label{thm:general}
If $G$ is a graph with $\delta(G) \ge r$, then $m(G-A,r)\ge m(G,r)$ for any independent set $A \subset V(G)$.
\end{theorem}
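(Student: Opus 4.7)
My plan is to argue directly rather than iterating Lemma~\ref{lem:G-v}. Let $S$ be a minimum $r$-percolating set of $G-A$; I will show that $S$, viewed as a subset of $V(G)$, is also an $r$-percolating set of $G$, which immediately yields $m(G,r) \le |S| = m(G-A,r)$.

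The verification has two phases. First, I compare the $r$-bootstrap processes on $G$ and on $G-A$, both started from $S$. Writing $I_t$ and $J_t$ for the infected sets at time $t$ in the $G$-process and the $(G-A)$-process respectively, a routine induction on $t$ gives $J_t \subseteq I_t$: every edge of $G-A$ is an edge of $G$, so whenever a vertex $v$ in $V(G-A)$ acquires $r$ infected neighbors via edges of $G-A$, it already has $r$ infected neighbors via edges of $G$. Since $S$ percolates $G-A$, this yields $V(G) \setminus A \subseteq I_t$ for some $t$. Second, once $V(G)\setminus A$ is completely infected in the $G$-process, I use that $A$ is independent: for any $v \in A$, every neighbor of $v$ in $G$ lies in $V(G)\setminus A$, and by hypothesis $\deg_G(v) \ge \delta(G) \ge r$, so $v$ has at least $r$ infected neighbors and becomes infected.

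The main point worth flagging is why one cannot simply induct on $|A|$ by repeatedly applying Lemma~\ref{lem:G-v}. The lemma requires $\delta \ge r$ of the host graph, but after removing a proper subset $A' \subsetneq A$, a vertex $u \in V(G)\setminus A$ with several of its neighbors inside $A \setminus A'$ can end up with $\deg_{G-A'}(u) < r$, so the hypothesis of the lemma is not available at intermediate steps. The direct two-phase argument sidesteps this entirely, invoking the independence of $A$ only at the very end to guarantee the degree count for vertices in $A$.
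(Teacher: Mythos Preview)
Your two-phase argument is correct and is precisely the natural direct proof: a percolating set of a subgraph still percolates that vertex set inside the supergraph, and then independence plus $\delta(G)\ge r$ finishes off the vertices of $A$.

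The paper, in fact, does exactly what you warn against: it writes $m(G,r)\le m(G-v_1,r)\le\cdots\le m(G-A,r)$ by ``repeatedly applying Lemma~\ref{lem:G-v}.'' Your objection that the hypothesis $\delta(G-\{v_1,\dots,v_{i-1}\})\ge r$ may fail at intermediate steps is well taken against Lemma~\ref{lem:G-v} \emph{as stated}. However, the paper's iteration is easily repaired rather than genuinely broken: the proof of Lemma~\ref{lem:G-v} never uses $\delta(G)\ge r$ globally, only that the removed vertex $x$ satisfies $\deg_G(x)\ge r$. Since $A$ is independent, each $v_i$ is non-adjacent to $v_1,\dots,v_{i-1}$, so $\deg_{G-\{v_1,\dots,v_{i-1}\}}(v_i)=\deg_G(v_i)\ge r$, and the one-vertex step goes through with this weaker hypothesis. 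So both routes work; yours is cleaner because it invokes independence once at the end, while the paper's version implicitly relies on the same fact at every step.

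One small slip in your last paragraph: you say a vertex $u\in V(G)\setminus A$ with neighbors in $A\setminus A'$ can have $\deg_{G-A'}(u)<r$. Those neighbors are still present in $G-A'$; the degree drop comes from neighbors of $u$ lying in $A'$, the set already removed. The substance of your point is unaffected.
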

\begin{proof}
    Let $A = \{v_1,v_2, \ldots ,v_k\}$ be an independent set of $G$. By repeatedly applying Lemma~\ref{lem:G-v} we get $$m(G,r) \le m(G-v_1,r) \le m(G-\{v_1,v_2\},r) \le \cdots \le m(G-A,r).$$
\end{proof}

\section{Cartesian grids}

Let us recall some definitions.
The \emph{Cartesian product} $G \cp H$ of two graphs $G$ and $H$ is the graph whose vertex set is $V(G) \times V(H)$, and where two vertices $(g_1,h_1)$ and $(g_2,h_2)$ are adjacent in $G \cp H$ if either $g_1=g_2$ and $h_1h_2 \in E(H)$, or $h_1=h_2$ and $g_1g_2 \in E(G)$. In the case of the Cartesian product of two paths $P_m$ and $P_n$ for some integers $m,n \ge 2$, the resulting graph $P_m \square P_n$ is a (\emph{square}) \emph{grid}. A~\emph{torus} is the Cartesian product $C_m \square C_n$ of any two cycles.

For obtaining the lower bound on $m_k^{\min}(G,2)$ for the graphs $G=P_m \cp P_n$ we will use the perimeter approach initiated by Bollob\'{a}s in \cite{Bol-2006}. In this approach, the vertices of $G$ are replaced by unit squares, and two squares share an edge whenever the corresponding vertices are adjacent. Whenever a square gets infected by at least two other squares, the perimeter of the infected domain does not increase, therefore, the perimeter of the initial infection must be at least as large as the perimeter of the total grid, which is $2m+2n$. Note that the perimeter argument can be applied on any subgraph of the grid. Our goal is to remove $k$ squares from the grid in such a way that the total perimeter of the remaining graph $G'$ is the smallest possible. This is closely related to constructing a shape with the smallest perimeter from $t=nm-k$ squares. For this purpose, we need the following lemma.

\begin{lemma}\label{lem:4n-2e}
    If there are $n$ unit squares in the plane, which can pairwise intersect in the way that two squares  share an edge, then the total perimeter of the acquired shape is $4n -2e$, where $e$ is the number of shared edges.
\end{lemma}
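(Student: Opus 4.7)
The plan is to prove Lemma~\ref{lem:4n-2e} by a straightforward double-counting argument on edges, without needing any induction. The key observation is that the boundary of the resulting shape consists precisely of those unit segments that are edges of exactly one of the $n$ unit squares, while the shared edges lie in the interior of the shape and contribute nothing to its perimeter.

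First I would let $\mathcal{S} = \{S_1, \ldots, S_n\}$ denote the given unit squares and consider the multiset $E$ of all edges of all squares, counted with multiplicity according to which squares they belong to. Since each $S_i$ has exactly four unit edges, we have $|E| = 4n$, and this number equals the sum of the perimeters of the squares viewed in isolation. Second, I would classify each unit segment appearing as an edge of some $S_i$ into two types: a \emph{shared} edge is one that is an edge of two squares in $\mathcal{S}$ (by hypothesis, intersections occur only along full edges, so no segment can belong to more than two squares), and an \emph{exposed} edge is one that is an edge of exactly one square. By definition, the number of distinct shared edges is $e$, and these account for $2e$ units of the multiset $E$.

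Third, I would argue that the perimeter of the resulting shape equals exactly the number of exposed edges. Indeed, a unit segment lies on the boundary of the shape if and only if one of its sides is inside the shape (covered by a square in $\mathcal{S}$) and the other is outside; this happens precisely when it is an edge of exactly one $S_i$. A shared edge has both of its sides covered by squares of $\mathcal{S}$ and so is interior, contributing $0$ to the perimeter. Combining these observations, the perimeter equals the number of exposed edges, which is
\[
|E| - 2e \;=\; 4n - 2e,
\]
as claimed.

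There is no real obstacle here; the only subtle point is ensuring that the geometric hypothesis (squares meet only along shared edges, not partially overlapping or touching just at corners in a way that would create boundary ambiguities) rules out a segment being shared by more than two squares and guarantees the clean interior/exterior dichotomy used in the third step. This is immediate from the statement of the lemma, so the argument above should suffice.
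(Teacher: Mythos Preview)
Your proof is correct. The paper actually states this lemma without proof, treating it as an elementary observation, so there is no argument in the paper to compare against; your double-counting of edges (total edge count $4n$, each shared edge contributing twice and hence being subtracted with weight~$2$) is the standard justification and is exactly what one would supply if asked to fill in the omitted details.
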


Since we want to minimize the total perimeter, the value of $e$ in the statement of Lemma~\ref{lem:4n-2e} should be maximized.

\begin{proposition}\label{prop:x^2+r}
If $p(n)$ is the minimum perimeter of a shape in the plane obtained from $n$ unit squares, which can pairwise intersect in the way that two squares share an edge, then for every $x^2\le n < (x+1)^2$ it follows that $4x\le p(n) \le 4(x+1)$. Moreover,
        $$p(x^2+r)=
    \begin{cases}
        4x, & r=0; \\
        4x+2, & 0 < r \le x; \\
        4x+4, & x < r \le 2x.
    \end{cases}
    $$
\end{proposition}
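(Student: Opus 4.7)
I would prove the exact formula for each of the three ranges of $r$ by matching an explicit construction (upper bound) with a row/column counting lower bound.

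For the upper bounds I would exhibit nearly-square configurations and apply Lemma~\ref{lem:4n-2e}. An $x\times x$ square realises $p(x^2)=4x$. For $0<r\le x$, an $x\times x$ square with a partial new row of $r$ extra unit squares attached to one side realises perimeter $4x+2$; the completed case $r=x$ gives an $x\times(x+1)$ rectangle. For $x<r\le 2x$, that rectangle with a partial further new row of $r-x$ additional squares realises perimeter $4x+4$. In each case counting the shared edges is routine: starting the new row costs only one fresh shared edge, while every subsequent square in the same row contributes two.

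For the lower bound I would use the following classical polyomino estimate. Let $W$ and $H$ denote the numbers of non-empty columns and non-empty rows of an arbitrary configuration with $n$ squares. In each non-empty column the filled squares form at least one maximal vertical run, which by itself contributes two horizontal boundary edges (a top and a bottom). Summing over columns gives at least $2W$ horizontal boundary edges, and symmetrically at least $2H$ vertical boundary edges, so $p(n)\ge 2W+2H$. Since the configuration fits inside its $W\times H$ bounding box, $WH\ge n$, and by AM-GM $W+H\ge 2\sqrt{n}$.

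It remains to push this to the claimed values. If $n=x^2$ then $W+H\ge 2x$ gives $p(n)\ge 4x$. If $x^2<n\le x^2+x$, then $W+H=2x$ would force $WH\le x^2<n$, so $W+H\ge 2x+1$; combined with the fact that $p(n)=4n-2e$ is even by Lemma~\ref{lem:4n-2e}, this yields $p(n)\ge 4x+2$. If $x^2+x<n\le x^2+2x$, then $W+H=2x+1$ would force $WH\le x(x+1)<n$, so $W+H\ge 2x+2$ and $p(n)\ge 4x+4$. The coarse bound $4x\le p(n)\le 4(x+1)$ for $x^2\le n<(x+1)^2$ is then an immediate consequence. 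The main delicate step is the row/column boundary count, in particular confirming that every non-empty column supplies at least two horizontal boundary edges regardless of whether the underlying configuration is connected.
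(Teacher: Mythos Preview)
Your proposal is correct and follows essentially the same approach as the paper: explicit nearly-square constructions for the upper bounds, and for the lower bounds the inequality $p(n)\ge 2W+2H$ together with $WH\ge n$ and an integer optimisation over $W+H$. The paper phrases the lower bound via the bounding-box dimensions $a\times b$ (so that $p(n)\ge 2a+2b$ and $ab\ge n$) and then minimises $2a+2n/a$ over integers; your use of non-empty row/column counts and the max-product-for-fixed-sum argument is a slightly cleaner variant, and in particular it handles possibly disconnected configurations without the implicit connectivity assumption in the paper's ``follow the boundary'' justification.
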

\begin{proof}
Let $H$ be a shape in the plane obtained from $n$ unit squares, which can pairwise intersect in the way that two squares share an edge, such that the perimeter of $H$ is $p(n)$. By applying Lemma~\ref{lem:4n-2e}, we infer that $H$ has to be close to a rectangular shape, while the fact that the product $u \times v$ is the smallest among all pairs of integers $u$ and $v$ with the fixed sum $u+v$ when $|u-v|$ is minimized, the shape of $H$ needs to be as close to a square as possible. In particular, if $n=x^2$, then $p(n)=4x$. When the shape cannot be a square, it is immediate (again using Lemma~\ref{lem:4n-2e}) that the shape must be connected and have no holes. Now let $x$ be the largest integer such that $n = x^2+r$, where $r$ is a positive integer. Pick the smallest rectangle containing $H$, and let $a \times b$ be its dimension.
The perimeter of $H$ is at least $2a + 2b$, since when one follows the edge of $H$, one needs to go all the way from left to right and from top to bottom twice.
    On the other hand, $ab \ge n$ because there are at least $n$ tiles inside the rectangle. Hence,
    \[
    p(n) \ge 2a + 2b \ge 2a + \frac{2n}{a}.
    \]
    The right side of the above inequality has a unique minimum at $a = \sqrt{n}$, but $a$ is an integer, so the right side is minimized when $a \in \{x,x+1\}$. First let $r \le x$. Then we have
    \[
    p(n) \ge 2x + \frac{2n}{x} = \frac{4x^2 + 2r}{x}
    \]
    as it is easy to see that $a = x + 1$ gives a larger value. If $r = 0$, this gives $p(n) \ge 4x$.
    If $0<r \le x$, we have $p(n) > 4x$, but as $p(n)$ is even, we have $p(n) \ge 4x + 2$.
    Finally if $r > x$, we should take $a=x+1$, as it gives a smaller lower bound on $p(n)$ than in the case when $a=x$. Since $r>x$, we get $n>x^2+x$. Therefore from $ab > x^2+x$ it follows that $b > x$ (or rather $b \ge x+1)$. This means that $p(n) \ge 2a + 2b \ge 2(x+1)+2(x+1) = 4x+4$. Since there exists shapes obtaining these bounds for all three possibilities (see~Figure~\ref{fig:examples of smallest perimeter}), the proof is complete.
\end{proof}

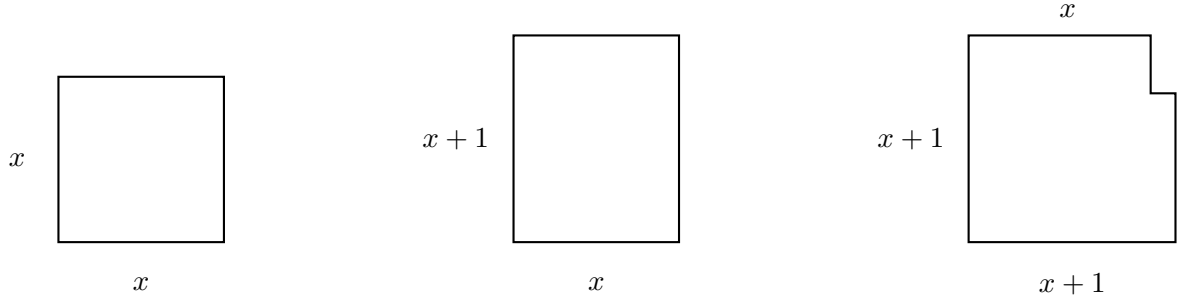
\begin{figure}
    \centering
    \begin{tikzpicture}[scale=1.1]

  \draw[thick]
    (0,0) -- (0,2) -- (2,2) -- (2,0) -- cycle;
  \node at (1,-0.5) {$x$};
  \node at (-0.5,1) {$x$};

  \begin{scope}[xshift=5.5cm]
    \draw[thick]
      (0,0) -- (0,2.5) -- (2,2.5) -- (2,0) -- cycle;
    \node at (1,-0.5) {$x$};
    \node at (-0.7,1.25) {$x+1$};
  \end{scope}

  \begin{scope}[xshift=11cm]
    \draw[thick]
      (0,0) -- (0,2.5) -- (2.2,2.5) -- (2.2,1.8) -- (2.5,1.8) -- (2.5,0) -- cycle;
    \node at (1.25,-0.5) {$x+1$};
    \node at (-0.7,1.25) {$x+1$};
     \node at (1.2,2.8) {$x$};
  \end{scope}

\end{tikzpicture}

    \caption{Examples with perimeters $4x$, $4x+2$ and $4x+4$, respectively}
    \label{fig:examples of smallest perimeter}
\end{figure}

\begin{proposition}\label{prp:xy+r}
   If $p(n)$ is the minimum perimeter of a shape in the plane obtained from $n$ unit squares, which can pairwise intersect in the way that two squares share an edge, such that the height of the shape is at most $x$, and $x^2 \le xy \le n < x(y+1)$, then  $$p(xy+r)=
    \begin{cases}
        2x+2y, & r=0;\\
        2x+2y+2, & 0 < r \le x.
    \end{cases}
    $$
\end{proposition}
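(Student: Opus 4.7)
The plan is to mirror the structure of the proof of Proposition~\ref{prop:x^2+r}, now with the height constraint $a\le x$ built into the bounding-rectangle argument. Let $H$ be a shape with $n$ unit squares and height at most $x$ that achieves the minimum perimeter $p(n)$. Let the minimum enclosing rectangle of $H$ have dimensions $a\times b$ with $a$ the height, so $a\le x$ by hypothesis and $ab\ge n$. The same ``tracing the boundary'' argument as in Proposition~\ref{prop:x^2+r} shows that the perimeter of $H$ is at least $2a+2b$.

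To derive the lower bound on $p(n)$, it suffices to prove that $a+b\ge x+y$ when $r=0$ and $a+b\ge x+y+1$ when $0<r\le x$. The key algebraic observation is that since $a\le x\le y$ (the latter coming from $x^2\le xy$), we have $(x-a)(y-a)\ge 0$, which rearranges to $a+\frac{xy}{a}\ge x+y$. In the case $r=0$ the inequality $b\ge xy/a$ then gives $a+b\ge a+xy/a\ge x+y$. In the case $0<r\le x$, suppose for contradiction that $a+b\le x+y$. Then $b\le x+y-a$, and substituting into $ab\ge xy+r$ yields $a(x+y-a)\ge xy+r$, i.e.\ $-(x-a)(y-a)\ge r$, contradicting $(x-a)(y-a)\ge 0$ together with $r>0$. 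Hence $a+b\ge x+y+1$, and the perimeter lower bound of $2x+2y+2$ follows.

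For the matching upper bound I would exhibit explicit constructions. When $r=0$, an $x\times y$ rectangle contains $xy=n$ squares and has perimeter $2x+2y$. When $0<r\le x$, take an $x\times y$ rectangle and attach a $1\times r$ column of unit squares to one of its length-$x$ sides; the column fits inside the height strip because $r\le x$. A direct count via Lemma~\ref{lem:4n-2e} gives $e=(2xy-x-y)+(r-1)+r$, hence perimeter $4n-2e=2x+2y+2$, as required.

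The main obstacle I anticipate is in the lower bound: justifying cleanly that the bound ``perimeter $\ge 2a+2b$'' is genuinely robust, even if one allows the extremal $H$ to be disconnected or to contain internal holes, exactly as in the unconstrained case of Proposition~\ref{prop:x^2+r}. Once this perimeter-via-bounding-rectangle bound is secured, the remainder of the argument reduces to the elementary inequality $(x-a)(y-a)\ge 0$, which is immediate from the ordering $a\le x\le y$.
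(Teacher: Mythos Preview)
Your proof is correct and follows essentially the same bounding-rectangle strategy as the paper: enclose the shape in a minimal $a\times b$ rectangle with $a\le x$, use the ``tracing the boundary'' inequality perimeter $\ge 2a+2b$ together with $ab\ge n$, and then exhibit the obvious rectangular constructions for the matching upper bounds. The one noteworthy difference is in the lower-bound algebra: the paper argues (rather tersely) via the monotonicity of $a+n/a$ on $a\le x\le\sqrt{n}$, whereas you reach the same conclusion through the clean factorization $(x-a)(y-a)\ge 0$, which makes the case $r>0$ especially transparent and avoids any implicit appeal to where $a+n/a$ is minimized.
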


\begin{proof}
    By using a similar reasoning as in the earlier proofs, we infer that a shape with the smallest perimeter is connected without holes and is contained within the smallest possible rectangle $a \times b$. Since the height is at most $x$, it follows that the shape is contained within a rectangle $x \times b$, therefore it has perimeter of at least $2x + 2b$. If $n = xy$, then the perimeter is exactly $2x+2y$. If $xy < n \le x(y+1)$, then the perimeter is $2x+2y +2$.
\end{proof}

The shapes with minimum perimeter provide a lower bound for the $2$-bootstrap percolation of the corresponding graph. The next result is obtained by combining the previous two results and having in mind that every unit square adds at most $4$ to the total perimeter.

\begin{lemma}
\label{lem:rectangle}
    Let $G=P_m \square P_n$ be the grid of size $m \times n$, where $2\le n \le m$, and let $t=mn-k$, where $k > 0$. If $G'$ is a subgraph of $G$ obtained from $G$ by removing $k$ of its vertices, then
    \[
    m(G',2) \geq
    \begin{cases}
        \left\lceil\frac{n+m-\ell}{2}\right\rceil, & t=n(m-\ell-1)+r, \, 0< r\le n \, \textrm{ and where }m-\ell>n;\\
        x, & t=x^2, x \le n;\\
        x+1, & t=x^2+r, x < n, r \le 2x;
    \end{cases}
    \]
    where $\ell = \lfloor \frac{k}{n}\rfloor.$
\end{lemma}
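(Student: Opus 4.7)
The plan is to apply the perimeter approach invoked immediately before the lemma statement. Fix any $2$-percolating set $S$ of $G'$ and identify each vertex of $G$ with a unit square in the plane. Two standard observations hold: the initially infected region has perimeter at most $4|S|$, and each newly infected square shares at least two sides with the already-infected region, so the perimeter of the infected region is non-increasing throughout the process. When the infection eventually covers all of $G'$, the final infected region is exactly the polyomino associated with $G'$ and its perimeter equals $p(G')$. Combining these facts gives
\[
m(G',2) \;\ge\; \left\lceil \frac{p(G')}{4} \right\rceil.
\]

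Next I would observe that $G'$ consists of $t = mn - k$ unit squares, all contained in the $m \times n$ bounding box of $G$, so the shorter side of any axis-aligned bounding rectangle of $G'$ is at most $n$. The rest is case analysis, feeding these constraints into Propositions~\ref{prop:x^2+r} and~\ref{prp:xy+r}. In the first case, $t = n(m - \ell - 1) + r$ with $0 < r \le n$ and $m - \ell > n$, we have $t > n^2$, so no $n \times n$ square can accommodate $G'$ and the height cap of $n$ is binding; I would apply Proposition~\ref{prp:xy+r} with $x = n$ and $y = m - \ell - 1$ when $r < n$, or $y = m - \ell$ when $r = n$. Each sub-case yields $p(G') \ge 2(n + m - \ell)$, hence $m(G',2) \ge \lceil (n + m - \ell)/2 \rceil$. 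In the second case, $t = x^2$ with $x \le n$, Proposition~\ref{prop:x^2+r} gives $p(G') \ge 4x$, so $m(G',2) \ge x$. In the third case, $t = x^2 + r$ with $x < n$ and $0 < r \le 2x$, the same proposition gives $p(G') \ge 4x + 2$, so $m(G',2) \ge \lceil (4x + 2)/4 \rceil = x + 1$.

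The main obstacle is conceptual rather than computational: one must verify that the correct auxiliary proposition applies in each regime and that the height constraint is handled honestly. In Cases 2 and 3 the near-square minimizer fits inside an $n \times n$ sub-rectangle and the height cap is inactive, so the unconstrained Proposition~\ref{prop:x^2+r} is the right tool. In Case 1 the inequality $t > n^2$ forces $G'$ into the elongated regime where the height cap truly binds, which is precisely the setting of Proposition~\ref{prp:xy+r}. Once this dichotomy is pinned down, the arithmetic for converting the perimeter lower bound into the stated ceiling expressions is routine and the three cases match the three displayed bounds.
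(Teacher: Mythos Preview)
Your proof is correct and follows essentially the same approach as the paper: both invoke the perimeter lower bound $m(G',2)\ge \lceil p(G')/4\rceil$, apply Proposition~\ref{prop:x^2+r} when $t\le n^2$ (Cases~2 and~3), and apply Proposition~\ref{prp:xy+r} with $x=n$ when $t>n^2$ (Case~1), splitting the latter into the sub-cases $r=n$ and $0<r<n$ exactly as you do. Your write-up is slightly more explicit about why the height cap is binding in Case~1 and inactive otherwise, but the underlying argument is identical.
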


\begin{proof}
    Note that for $t\le n^2$, the result follows from Proposition~\ref{prop:x^2+r}, which resolves the second and the third line in the right side of the stated inequality.

    If $t>n^2$, the results follows by using Proposition~\ref{prp:xy+r}. Indeed, let $t=n(m-\ell-1) + r$, where $\ell = \lfloor \frac{n}{k}\rfloor$ and $r\le n$. If $r=n$, then $t=n(m-\ell)$ and we let $x=n$ and $y=(m-\ell)$. Thus, the smallest perimeter is $p(xy)=2n+2(m-\ell)$. Therefore in this case, $m(G',2) \ge \left\lceil\frac{n+(m-\ell)}{2}\right\rceil$. For $0<r<n$, we let $x=n$ and $y=m-\ell-1$, and so $t=xy+r$. The smallest perimeter is therefore in this case $2n+2(m-\ell-1) +2 = 2n+2m-2\ell$ and $m(G',2) \ge \left\lceil\frac{n+(m-\ell)}{2}\right\rceil.$
\end{proof}

Next, we focus on obtaining the upper bound on $m_k^{\min}(P_m\cp P_n,2)$. We again distinguish two possibilities with respect to $k$, which this time are presented in two separate auxiliary results.

\begin{lemma}\label{lem:rectangle grids}
    Let $G=P_m \square P_n$ be the grid of size $m \times n$ where $2\le n \le m$. If $1 \le k \le (m-n)n$, then
    \[
        m_k^{\min}(G,2) \le \left\lceil\frac{n+m-\ell}{2}\right\rceil,    \]
        where $\ell = \lfloor\frac{k}{n}\rfloor$.
\end{lemma}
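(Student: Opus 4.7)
The plan is to construct, for every $k$ with $1 \le k \le (m-n)n$, an explicit pollution set $A$ with $|A| = k$ together with an initial infection set $S \subseteq V(G - A)$ of size $\lceil (n + m - \ell)/2 \rceil$ that $2$-percolates $G - A$. Write $k = \ell n + s$ with $\ell = \lfloor k/n \rfloor$ and $0 \le s < n$; the hypothesis $k \le (m-n)n$ gives $\ell \le m - n$, and setting $a := m - \ell$ yields $a \ge n$, with $a > n$ whenever $s \ge 1$.

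Take $A$ to be the union of the $\ell$ rightmost columns of $G$ together with, when $s \ge 1$, the top $s$ vertices $(a, n - s + 1), \ldots, (a, n)$ of column $a$, so $|A| = \ell n + s = k$. The induced subgraph $G - A$ is then the $a \times n$ rectangle (columns $1, \ldots, a$) with the top $s$ cells of its rightmost column deleted. Take $S$ to consist of the anti-diagonal $\{(i, n + 1 - i) : 1 \le i \le n\}$, augmented when $a > n$ by the row-$1$ seeds $(n + 2j, 1)$ for $j = 1, \ldots, \lfloor (a - n)/2 \rfloor$, together with $(a, 1)$ when $a - n$ is odd. A direct count gives $|S| = n + \lceil (a - n)/2 \rceil = \lceil (a + n)/2 \rceil = \lceil (n + m - \ell)/2 \rceil$, matching the claimed bound.

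To see that $S$ percolates $G - A$, argue in two stages. First, the anti-diagonal is the classical minimum $2$-percolating set of the $n \times n$ subgrid in columns $1, \ldots, n$, and a round-by-round check of the ``thickening bands'' around it shows that this entire subgrid, and in particular all of column $n$, becomes infected. Second, note that the row-$1$ vertex $(n, 1)$ coming from the anti-diagonal and the row-$1$ seeds together form a subset of row $1$ in which consecutive infected positions lie at most two columns apart and which always contains $(a, 1)$; hence every row-$1$ vertex in columns $n, n + 1, \ldots, a$ becomes infected in one further round, after which each column $n + 1, \ldots, a$ percolates upward against the already-infected column on its left. The sweep halts cleanly at row $n - s$ of column $a$, where the topmost surviving cell $(a, n - s)$ has its two neighbors in $G - A$, namely $(a - 1, n - s)$ and $(a, n - s - 1)$, already infected; the removed cells $(a, n - s + 1), \ldots, (a, n)$ need not be reached.

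The main obstacle I anticipate is a parity subtlety: when $a + n$ is even one has only $(a + n)/2$ initial vertices to spend, which rules out the naive strategy of ``percolate the $(a - 1) \times n$ subgrid optimally and then spend one more vertex to seed the stub in column $a$''. The placement above sidesteps this waste by letting the anti-diagonal's endpoint $(n, 1)$ together with the row-$1$ seeds jointly initiate the rightward sweep, so no vertex is paid for twice; the verification that the sweep indeed reaches $(a, n - s)$ in every parity case is the only accounting step that needs care.
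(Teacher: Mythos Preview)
Your proof is correct and follows essentially the same approach as the paper: you choose the identical polluted set $A$ (the last $\ell$ columns together with the top $s$ cells of column $a=m-\ell$), and then exhibit a $2$-percolating set of size $\lceil (n+m-\ell)/2\rceil$ for the remaining near-rectangle. The only difference is cosmetic: the paper uses the alternating vertices along the ``L''-shaped path (first column $\cup$ first row) as its percolating set, whereas you use the anti-diagonal of the $n\times n$ block together with spaced seeds along row~$1$; both are standard minimum constructions for a grid and lead to the same count and the same column-by-column sweep verification.
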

\begin{proof}
First we will provide a construction of a polluted set $P$, and then a construction of a percolating set $S$ of $G'=G-P$. Denote $V(P_m)=[m]$, $V(P_n)=[n]$ and let $\ell = \lfloor\frac{k}{n}\rfloor$. If $k = \ell n$, then let
\[
P = \lbrace m-\ell +1, m-\ell+2,\ldots, m \rbrace \times [n],
\]
while if $k > \ell n$, then let
\[
\begin{array}{lcl}
P & = & \lbrace m-\ell +1, m-\ell+2,\ldots, m \rbrace \times [n] \,\, \cup  \1 \\
& & \, \, \{(m-\ell,n),(m-\ell,n-1),\ldots, (m-\ell,n-(k-\ell n)+1) \}.
\end{array}
\]
In other words, $P$ consists of the final $\ell$ columns of $G$ and, if $k > \ell n$, then the remaining $k-\ell n$ vertices from the top of the $(m-\ell)$-th column. Notice that if $k = \ell n$, then the remaining graph $G'=G-P$ has $m-\ell$ full columns (that contain no polluted vertex), and if $k > \ell n$, then the graph $G'$ has $m-\ell -1$ full columns and, additionally, vertices $(m-\ell,i)$, where $1\le i \le n-(k-\ell n)$. When $m = 8$ and $n = 5$, the polluted $m \times n$ grid with $k=8$ and $k=11$ is illustrated in Figure~\ref{fig:grid small k}(a) and~\ref{fig:grid small k}(b), respectively, where dotted vertices (associated with the set $P$ defined earlier) are polluted and shaded vertices present initial infection.

Consider the path $C$ in $G$ with $(1,n)$ as one of its endvertices, containing all vertices of the first column and the first row in $G$.  Now, color the vertices of $C$ by alternating black and white vertices, beginning with $(1,n)$ as black. Let $S$ consist of all black vertices of $C$ together with $(m-\ell,1)$ in the case when $m-\ell +n$ is odd. Since $C$ contains $m-\ell +n$ vertices, it follows that $|S|=\left\lceil\frac{m+n-\ell}{2}\right\rceil$. The set $S$ is a percolating set of $G'$. Indeed, in the first round the whole set $C$ becomes infected. After that, we can see that the $2$nd column becomes infected in order $(2,2),(2,3), \ldots,(2,n)$, then the third and so on. This gives us an upper bound on $m_k^{\min}(G,2)$.
\end{proof}

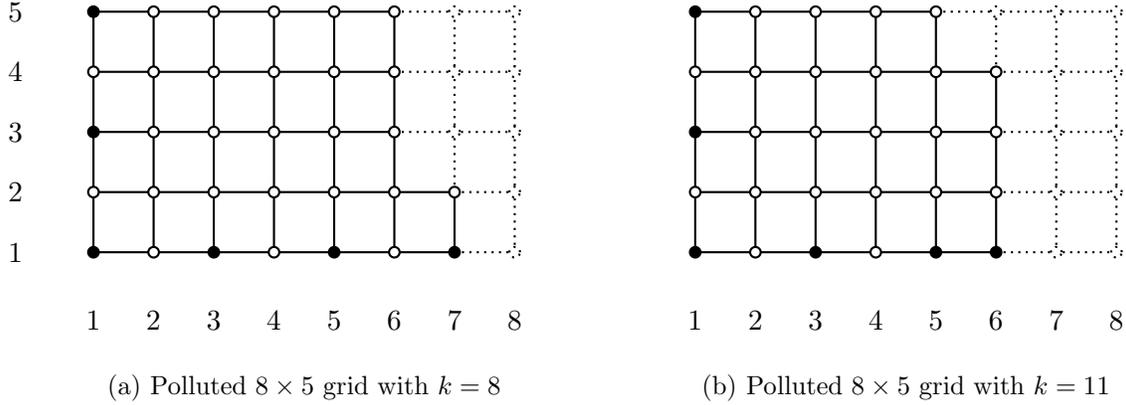
\begin{figure}[ht!]
\begin{center}
\begin{tikzpicture}[scale=0.8,style=thick,x=1cm,y=1cm]
\def\vr{2.5pt} 

\path (-1,1) coordinate (a);
\path (-1,2) coordinate (b);
\path (-1,3) coordinate (c);
\path (-1,4) coordinate (d);
\path (-1,5) coordinate (e);
\path (0,0.2) coordinate (1);
\path (1,0.2) coordinate (2);
\path (2,0.2) coordinate (3);
\path (3,0.2) coordinate (4);
\path (4,0.2) coordinate (5);
\path (5,0.2) coordinate (6);
\path (6,0.2) coordinate (7);
\path (7,0.2) coordinate (8);

\path (0,1) coordinate (a1);
\path (0,2) coordinate (b1);
\path (0,3) coordinate (c1);
\path (0,4) coordinate (d1);
\path (0,5) coordinate (e1);

\path (1,1) coordinate (a2);
\path (1,2) coordinate (b2);
\path (1,3) coordinate (c2);
\path (1,4) coordinate (d2);
\path (1,5) coordinate (e2);

\path (2,1) coordinate (a3);
\path (2,2) coordinate (b3);
\path (2,3) coordinate (c3);
\path (2,4) coordinate (d3);
\path (2,5) coordinate (e3);

\path (3,1) coordinate (a4);
\path (3,2) coordinate (b4);
\path (3,3) coordinate (c4);
\path (3,4) coordinate (d4);
\path (3,5) coordinate (e4);

\path (4,1) coordinate (a5);
\path (4,2) coordinate (b5);
\path (4,3) coordinate (c5);
\path (4,4) coordinate (d5);
\path (4,5) coordinate (e5);

\path (5,1) coordinate (a6);
\path (5,2) coordinate (b6);
\path (5,3) coordinate (c6);
\path (5,4) coordinate (d6);
\path (5,5) coordinate (e6);

\path (6,1) coordinate (a7);
\path (6,2) coordinate (b7);
\path (6,3) coordinate (c7);
\path (6,4) coordinate (d7);
\path (6,5) coordinate (e7);

\path (7,1) coordinate (a8);
\path (7,2) coordinate (b8);
\path (7,3) coordinate (c8);
\path (7,4) coordinate (d8);
\path (7,5) coordinate (e8);

\draw (a1)--(b1)--(c1)--(d1)--(e1);
\draw (a2)--(b2)--(c2)--(d2)--(e2);
\draw (a3)--(b3)--(c3)--(d3)--(e3);
\draw (a4)--(b4)--(c4)--(d4)--(e4);
\draw (a5)--(b5)--(c5)--(d5)--(e5);
\draw (a6)--(b6)--(c6)--(d6)--(e6);
\draw (a7)--(b7);
\draw[dotted] (b7)--(c7)--(d7)--(e7);
\draw[dotted] (a8)--(b8)--(c8)--(d8)--(e8);

\draw (a1)--(a2)--(a3)--(a4)--(a5)--(a6)--(a7);
\draw[dotted] (a7)--(a8);
\draw (b1)--(b2)--(b3)--(b4)--(b5)--(b6)--(b7);
\draw[dotted] (b7)--(b8);
\draw (c1)--(c2)--(c3)--(c4)--(c5)--(c6);
\draw[dotted] (c6)--(c7)--(c8);
\draw (d1)--(d2)--(d3)--(d4)--(d5)--(d6);
\draw[dotted] (d6)--(d7)--(d8);
\draw (e1)--(e2)--(e3)--(e4)--(e5)--(e6);
\draw[dotted] (e6)--(e7)--(e8);

\draw (a1) [fill=black] circle (\vr);
\draw (b1) [fill=white] circle (\vr);
\draw (c1) [fill=black] circle (\vr);
\draw (d1) [fill=white] circle (\vr);
\draw (e1) [fill=black] circle (\vr);

\draw (a2) [fill=white] circle (\vr);
\draw (b2) [fill=white] circle (\vr);
\draw (c2) [fill=white] circle (\vr);
\draw (d2) [fill=white] circle (\vr);
\draw (e2) [fill=white] circle (\vr);

\draw (a3) [fill=black] circle (\vr);
\draw (b3) [fill=white] circle (\vr);
\draw (c3) [fill=white] circle (\vr);
\draw (d3) [fill=white] circle (\vr);
\draw (e3) [fill=white] circle (\vr);

\draw (a4) [fill=white] circle (\vr);
\draw (b4) [fill=white] circle (\vr);
\draw (c4) [fill=white] circle (\vr);
\draw (d4) [fill=white] circle (\vr);
\draw (e4) [fill=white] circle (\vr);

\draw (a5) [fill=black] circle (\vr);
\draw (b5) [fill=white] circle (\vr);
\draw (c5) [fill=white] circle (\vr);
\draw (d5) [fill=white] circle (\vr);
\draw (e5) [fill=white] circle (\vr);

\draw (a6) [fill=white] circle (\vr);
\draw (b6) [fill=white] circle (\vr);
\draw (c6) [fill=white] circle (\vr);
\draw (d6) [fill=white] circle (\vr);
\draw (e6) [fill=white] circle (\vr);

\draw (a7) [fill=black] circle (\vr);
\draw (b7) [fill=white] circle (\vr);
\draw[dotted] (c7) [fill=white] circle (\vr);
\draw[dotted] (d7) [fill=white] circle (\vr);
\draw[dotted] (e7) [fill=white] circle (\vr);

\draw[dotted] (a8) [fill=white] circle (\vr);
\draw[dotted] (b8) [fill=white] circle (\vr);
\draw[dotted] (c8) [fill=white] circle (\vr);
\draw[dotted] (d8) [fill=white] circle (\vr);
\draw[dotted] (e8) [fill=white] circle (\vr);

\draw[anchor = east] (a) node {1};
\draw[anchor = east] (b) node {2};
\draw[anchor = east] (c) node {3};
\draw[anchor = east] (d) node {4};
\draw[anchor = east] (e) node {5};
\draw[anchor = north] (1) node {1};
\draw[anchor = north] (2) node {2};
\draw[anchor = north] (3) node {3};
\draw[anchor = north] (4) node {4};
\draw[anchor = north] (5) node {5};
\draw[anchor = north] (6) node {6};
\draw[anchor = north] (7) node {7};
\draw[anchor = north] (8) node {8};

\draw (3.5,-1.25) node {{\small (a) Polluted $8\times 5$ grid with $k=8$}};

\begin{scope}[shift={(10,0)}]

\path (0,0.2) coordinate (1);
\path (1,0.2) coordinate (2);
\path (2,0.2) coordinate (3);
\path (3,0.2) coordinate (4);
\path (4,0.2) coordinate (5);
\path (5,0.2) coordinate (6);
\path (6,0.2) coordinate (7);
\path (7,0.2) coordinate (8);

\path (0,1) coordinate (a1);
\path (0,2) coordinate (b1);
\path (0,3) coordinate (c1);
\path (0,4) coordinate (d1);
\path (0,5) coordinate (e1);

\path (1,1) coordinate (a2);
\path (1,2) coordinate (b2);
\path (1,3) coordinate (c2);
\path (1,4) coordinate (d2);
\path (1,5) coordinate (e2);

\path (2,1) coordinate (a3);
\path (2,2) coordinate (b3);
\path (2,3) coordinate (c3);
\path (2,4) coordinate (d3);
\path (2,5) coordinate (e3);

\path (3,1) coordinate (a4);
\path (3,2) coordinate (b4);
\path (3,3) coordinate (c4);
\path (3,4) coordinate (d4);
\path (3,5) coordinate (e4);

\path (4,1) coordinate (a5);
\path (4,2) coordinate (b5);
\path (4,3) coordinate (c5);
\path (4,4) coordinate (d5);
\path (4,5) coordinate (e5);

\path (5,1) coordinate (a6);
\path (5,2) coordinate (b6);
\path (5,3) coordinate (c6);
\path (5,4) coordinate (d6);
\path (5,5) coordinate (e6);

\path (6,1) coordinate (a7);
\path (6,2) coordinate (b7);
\path (6,3) coordinate (c7);
\path (6,4) coordinate (d7);
\path (6,5) coordinate (e7);

\path (7,1) coordinate (a8);
\path (7,2) coordinate (b8);
\path (7,3) coordinate (c8);
\path (7,4) coordinate (d8);
\path (7,5) coordinate (e8);

\draw (a1)--(b1)--(c1)--(d1)--(e1);
\draw (a2)--(b2)--(c2)--(d2)--(e2);
\draw (a3)--(b3)--(c3)--(d3)--(e3);
\draw (a4)--(b4)--(c4)--(d4)--(e4);
\draw (a5)--(b5)--(c5)--(d5)--(e5);
\draw (a6)--(b6)--(c6)--(d6);
\draw[dotted] (d6)--(e6);
\draw[dotted] (a7)--(b7)--(c7)--(d7)--(e7);
\draw[dotted] (a8)--(b8)--(c8)--(d8)--(e8);

\draw (a1)--(a2)--(a3)--(a4)--(a5)--(a6);
\draw[dotted] (a6)--(a7)--(a8);
\draw (b1)--(b2)--(b3)--(b4)--(b5)--(b6);
\draw[dotted] (b6)--(b7)--(b8);
\draw (c1)--(c2)--(c3)--(c4)--(c5)--(c6);
\draw[dotted] (c6)--(c7)--(c8);
\draw (d1)--(d2)--(d3)--(d4)--(d5)--(d6);
\draw[dotted] (d6)--(d7)--(d8);
\draw (e1)--(e2)--(e3)--(e4)--(e5);
\draw[dotted] (e5)--(e6)--(e7)--(e8);

\draw (a1) [fill=black] circle (\vr);
\draw (b1) [fill=white] circle (\vr);
\draw (c1) [fill=black] circle (\vr);
\draw (d1) [fill=white] circle (\vr);
\draw (e1) [fill=black] circle (\vr);

\draw (a2) [fill=white] circle (\vr);
\draw (b2) [fill=white] circle (\vr);
\draw (c2) [fill=white] circle (\vr);
\draw (d2) [fill=white] circle (\vr);
\draw (e2) [fill=white] circle (\vr);

\draw (a3) [fill=black] circle (\vr);
\draw (b3) [fill=white] circle (\vr);
\draw (c3) [fill=white] circle (\vr);
\draw (d3) [fill=white] circle (\vr);
\draw (e3) [fill=white] circle (\vr);

\draw (a4) [fill=white] circle (\vr);
\draw (b4) [fill=white] circle (\vr);
\draw (c4) [fill=white] circle (\vr);
\draw (d4) [fill=white] circle (\vr);
\draw (e4) [fill=white] circle (\vr);

\draw (a5) [fill=black] circle (\vr);
\draw (b5) [fill=white] circle (\vr);
\draw (c5) [fill=white] circle (\vr);
\draw (d5) [fill=white] circle (\vr);
\draw (e5) [fill=white] circle (\vr);

\draw (a6) [fill=black] circle (\vr);
\draw (b6) [fill=white] circle (\vr);
\draw (c6) [fill=white] circle (\vr);
\draw (d6) [fill=white] circle (\vr);
\draw[dotted] (e6) [fill=white] circle (\vr);

\draw[dotted] (a7) [fill=white] circle (\vr);
\draw[dotted] (b7) [fill=white] circle (\vr);
\draw[dotted] (c7) [fill=white] circle (\vr);
\draw[dotted] (d7) [fill=white] circle (\vr);
\draw[dotted] (e7) [fill=white] circle (\vr);

\draw[dotted] (a8) [fill=white] circle (\vr);
\draw[dotted] (b8) [fill=white] circle (\vr);
\draw[dotted] (c8) [fill=white] circle (\vr);
\draw[dotted] (d8) [fill=white] circle (\vr);
\draw[dotted] (e8) [fill=white] circle (\vr);


\draw[anchor = north] (1) node {1};
\draw[anchor = north] (2) node {2};
\draw[anchor = north] (3) node {3};
\draw[anchor = north] (4) node {4};
\draw[anchor = north] (5) node {5};
\draw[anchor = north] (6) node {6};
\draw[anchor = north] (7) node {7};
\draw[anchor = north] (8) node {8};

\draw (3.5,-1.25) node {{\small (b) Polluted $8\times 5$ grid with $k=11$}};

\end{scope}

\end{tikzpicture}
\end{center}

\caption{Polluted $8\times 5$ grid with $k=8$ and $k=11$, respectively, in the proof of Lemma~\ref{lem:rectangle grids}}
\label{fig:grid small k}
\end{figure}

\begin{lemma}\label{lem:square grids}
    If $G=P_m \square P_n$ is the Cartesian grid of size $m \times n$ where $2\le n \le m$, and $(m-n)n \le k \le mn$, then
    \[
        m_k^{\min}(G,2) \le \left\lceil\frac{1}{2}\left\lceil 2\sqrt{t} \, \right\rceil\right\rceil,
    \]
where $t =mn-k$.
\end{lemma}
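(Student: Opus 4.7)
The plan is to construct an explicit polluted set $P \subseteq V(G)$ with $|P| = k$ and a percolating set $S$ of $G - P$ whose cardinality matches the claimed bound. Since the lower bound in Lemma~\ref{lem:rectangle} is essentially the perimeter of the non-polluted region divided by four, it is natural to pack the $t = mn - k$ non-polluted cells into a region $H$ of minimum perimeter (as described in Proposition~\ref{prop:x^2+r}) and then exhibit a percolating set for $H$ of the predicted size.

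Write $x = \lfloor \sqrt{t} \rfloor$ and $r = t - x^2 \in \{0, 1, \ldots, 2x\}$. From $t \le n^2$ we get $x \le n$, and when $r > 0$ one checks that $x + 1 \le n$, so each of the shapes below fits inside $G = P_m \square P_n$. I split into three subcases. When $r = 0$, take $H = [x] \times [x]$ and $S = \{(i, i) : 1 \le i \le x\}$ of size $x$. When $1 \le r \le x$, take $H = ([x] \times [x]) \cup \{(i, x+1) : 1 \le i \le r\}$ and $S = \{(i, i) : 1 \le i \le x\} \cup \{(1, x+1)\}$ of size $x + 1$. When $x + 1 \le r \le 2x$, take $H = ([x] \times [x+1]) \cup \{(x+1, j) : 1 \le j \le r - x\}$ and let $S$ be the anti-diagonal $\{(i, x+2-i) : 1 \le i \le x+1\}$ of the enclosing $(x+1) \times (x+1)$ square, of size $x + 1$; here $(x+1, 1) \in H$ because $r - x \ge 1$, so $S \subseteq H$. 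In every case $|H| = t$, hence $|P| = k$, and a short case analysis using $2x < 2\sqrt{t} \le 2x+1$ when $0 < r \le x$ and $2x+1 < 2\sqrt{t} < 2x+2$ when $x < r \le 2x$ shows that $|S|$ equals $\lceil \lceil 2\sqrt{t} \rceil / 2 \rceil$.

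It remains to verify that $S$ percolates $H$. The first subcase is the classical fact that the main diagonal percolates $P_x \square P_x$. In the second subcase the diagonal first percolates $[x] \times [x]$; then $(1, x+1)$ is already in $S$, and for $i = 2, \ldots, r$ the cell $(i, x+1)$ acquires two infected neighbors $(i-1, x+1)$ and $(i, x)$, inductively completing the extra column. The main obstacle is the third subcase, where I must show that the anti-diagonal still percolates $H$ despite the missing cells $(x+1, r-x+1), \ldots, (x+1, x+1)$ of the bounding square. The key observation is that in the anti-diagonal percolation of $P_{x+1} \square P_{x+1}$, cells are infected in increasing order of $|i + j - (x+2)|$, and the missing cells lie among those with the largest values of $i + j$. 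I will prove by induction on $k \ge 0$ that every $(i, j) \in H$ with $|i + j - (x+2)| \le k$ is infected by round $k$; the inductive step reduces to checking that for each such $(i, j)$ its two neighbors one step closer in $i + j$ to the anti-diagonal, namely $(i-1, j)$ and $(i, j-1)$ when $i + j > x+2$ and $(i+1, j)$ and $(i, j+1)$ when $i + j < x+2$, both lie in $H$. This follows from the fact that the removed cells all sit in row $x + 1$ in columns strictly greater than $r - x$, so none of the relevant predecessor cells is ever missing.
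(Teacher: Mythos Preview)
Your proof is correct and follows essentially the same strategy as the paper: you choose the identical shapes $H$ for the non-polluted region in all three cases, matching the paper's constructions exactly. The only difference is cosmetic---you use (anti-)diagonals as percolating sets where the paper uses alternating vertices along the first row and column---and your verification that the anti-diagonal still percolates the truncated square in Case~3 is carried out carefully and correctly.
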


\begin{proof}
Note that the construction from the proof of Lemma~\ref{lem:rectangle grids} for the case $k=(m-n)n$, where $P$ be the set of the final $\ell$ columns, gives the square grid of size $n \times n$, which we denote by $G_s$ (clearly, $G_s=G-P$).
In this case, $t=n^2$, and we get $m_k^{\min}(G,2) \le \left\lceil\frac{1}{2}\left\lceil 2\sqrt{t}\right\rceil\right\rceil=n$.  Now, let $k>(m-n)n$.
    Furthermore, let $k_1 = k -(m-n)n \ge 0$ and denote as $P_s$ the set of polluted vertices of size $k_1$ that are not in $P$. From this point onward, when $k_1 > 0$, we want to keep the shape of the remaining graph $G'=G_s-P_s$ as close as possible to a square. Instead of constructing the set $P_s$, we will construct the remaining graph $G'$ (as a subgraph of $G$). 
    Let $t=n^2-k_1$ represent the number of vertices of $G'$. Note that $t=x^2+o$, where $o\le 2x$. We consider three cases.

    \textbf{Case 1:} $o = 0$. Then $t=x^2$ and we let $G'$ be the square grid on $x^2$ vertices, namely $V(G')=\{(i,j) \, : \, i,j \in [x]\}$. We infer that $m(G',2)=x = \sqrt{t} = \left\lceil\frac{1}{2}\lceil 2\sqrt{t} \, \rceil \right\rceil$. When $m = 8$, $n = 5$, the polluted $m \times n$ grid with $k=24$ is illustrated in Figure~\ref{fig:grid large k}(a) where here $x = 4$ and $t = 16 = x^2$ and where dotted vertices (associated with the set $P$ defined earlier) are polluted and shaded vertices present initial infection.

    \textbf{Case 2:} $0 < o  \le x$. Then let $V(G')= \{(i,j) \, : \, i,j \in [x]\} \cup \{(1,x+1),\ldots, (o,x+1)\}$. Namely we add $o$ vertices to a new row on top of the square grid from Case~1. Then the construction is similar to the one in the proof of Lemma~\ref{lem:rectangle grids}. Again let $C$ be the set of alternating black and white vertices from the first column and first row, starting with the black vertex $(1,x+1)$ and ending with the white vertex $(x,1)$ (since this set has length $2x$, it indeed ends in a white vertex). Let $S$ be the set of black vertices from $C$ together with $(x,1)$. Clearly, $|S|=x+1$ and it is easy to see that $S$ is a minimum percolating set of $G'$ and $m(G',2)=x+1$. Furthermore, since $0<o\le x$, it follows that $\left\lceil\frac{1}{2}\lceil 2\sqrt{t} \, \rceil\right\rceil = \left\lceil\frac{1}{2}(2x+1)\right\rceil = x+1$.  When $m = 8$, $n = 5$, the polluted $m \times n$ grid with $k=22$ is illustrated in Figure~\ref{fig:grid large k}(b) where here $x = 4$, $o = 2$ and $t = 18 = x^2 + o$ and where dotted vertices (associated with the set $P$ defined earlier) are polluted and shaded vertices present initial infection.

    \textbf{Case 3:} $x < o \le 2x.$ Then $$V(G')= \{(i,j) \, : \, i,j \in [x]\} \cup \{(1,x+1),\ldots, (x,x+1)\} \cup \{(x+1,1),\ldots,(x+1,o-x)\}.$$ Namely we add a complete row on top of the square grid from Case~1, while also adding vertices to the right side of the final column of the square grid. Similarly to Case~2, we let $C$ be the set of alternating black and white vertices from the first column and first row, starting with the black vertex $(1,x+1)$ and ending with the black vertex $(x+1,1)$ (now the set has length $2x+1$). Let $S$ be the set of black vertices from $C$, therefore $|S|=x+1$. This is again a minimum percolating set of $G'$ and $m(G',2)=x+1.$ Furthermore, since $x < o \le 2x$ it follows that $\left\lceil\frac{1}{2}\lceil 2\sqrt{t} \, \rceil\right\rceil = \left\lceil\frac{1}{2}(2x+2)\right\rceil=x+1$. This completes the proof of Lemma~\ref{lem:square grids}.
\end{proof}

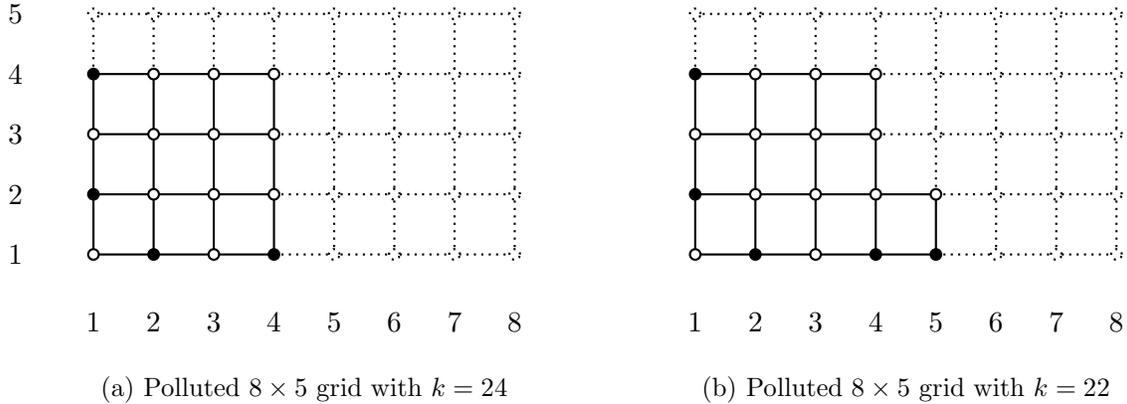
\begin{figure}[ht!]
\begin{center}
\begin{tikzpicture}[scale=0.8,style=thick,x=1cm,y=1cm]
\def\vr{2.5pt} 

\path (0,0.2) coordinate (1);
\path (1,0.2) coordinate (2);
\path (2,0.2) coordinate (3);
\path (3,0.2) coordinate (4);
\path (4,0.2) coordinate (5);
\path (5,0.2) coordinate (6);
\path (6,0.2) coordinate (7);
\path (7,0.2) coordinate (8);

\path (0,1) coordinate (a1);
\path (0,2) coordinate (b1);
\path (0,3) coordinate (c1);
\path (0,4) coordinate (d1);
\path (0,5) coordinate (e1);

\path (1,1) coordinate (a2);
\path (1,2) coordinate (b2);
\path (1,3) coordinate (c2);
\path (1,4) coordinate (d2);
\path (1,5) coordinate (e2);

\path (2,1) coordinate (a3);
\path (2,2) coordinate (b3);
\path (2,3) coordinate (c3);
\path (2,4) coordinate (d3);
\path (2,5) coordinate (e3);

\path (3,1) coordinate (a4);
\path (3,2) coordinate (b4);
\path (3,3) coordinate (c4);
\path (3,4) coordinate (d4);
\path (3,5) coordinate (e4);

\path (4,1) coordinate (a5);
\path (4,2) coordinate (b5);
\path (4,3) coordinate (c5);
\path (4,4) coordinate (d5);
\path (4,5) coordinate (e5);

\path (5,1) coordinate (a6);
\path (5,2) coordinate (b6);
\path (5,3) coordinate (c6);
\path (5,4) coordinate (d6);
\path (5,5) coordinate (e6);

\path (6,1) coordinate (a7);
\path (6,2) coordinate (b7);
\path (6,3) coordinate (c7);
\path (6,4) coordinate (d7);
\path (6,5) coordinate (e7);

\path (7,1) coordinate (a8);
\path (7,2) coordinate (b8);
\path (7,3) coordinate (c8);
\path (7,4) coordinate (d8);
\path (7,5) coordinate (e8);

\draw (a1)--(b1)--(c1)--(d1);
\draw[dotted] (d1)--(e1);
\draw (a2)--(b2)--(c2)--(d2);
\draw[dotted] (d2)--(e2);
\draw (a3)--(b3)--(c3)--(d3);
\draw[dotted] (d3)--(e3);
\draw (a4)--(b4)--(c4)--(d4);
\draw[dotted] (d4)--(e4);
\draw[dotted] (a5)--(b5)--(c5)--(d5)--(e5);
\draw[dotted] (a6)--(b6)--(c6)--(d6)--(e6);
\draw[dotted] (a7)--(b7)--(c7)--(d7)--(e7);
\draw[dotted] (a8)--(b8)--(c8)--(d8)--(e8);

\draw (a1)--(a2)--(a3)--(a4);
\draw[dotted] (a4)--(a5)--(a6)--(a7)--(a8);
\draw (b1)--(b2)--(b3)--(b4);
\draw[dotted] (b4)--(b5)--(b6)--(b7)--(b8);
\draw (c1)--(c2)--(c3)--(c4);
\draw[dotted] (c4)--(c5)--(c6)--(c7)--(c8);
\draw (d1)--(d2)--(d3)--(d4);
\draw[dotted] (d4)--(d5)--(d6)--(d7)--(d8);
\draw[dotted] (e1)--(e2)--(e3)--(e4)--(e5)--(e6)--(e7)--(e8);

\draw (a1) [fill=white] circle (\vr);
\draw (b1) [fill=black] circle (\vr);
\draw (c1) [fill=white] circle (\vr);
\draw (d1) [fill=black] circle (\vr);
\draw[dotted] (e1) [fill=white] circle (\vr);

\draw (a2) [fill=black] circle (\vr);
\draw (b2) [fill=white] circle (\vr);
\draw (c2) [fill=white] circle (\vr);
\draw (d2) [fill=white] circle (\vr);
\draw[dotted] (e2) [fill=white] circle (\vr);

\draw (a3) [fill=white] circle (\vr);
\draw (b3) [fill=white] circle (\vr);
\draw (c3) [fill=white] circle (\vr);
\draw (d3) [fill=white] circle (\vr);
\draw[dotted] (e3) [fill=white] circle (\vr);

\draw (a4) [fill=black] circle (\vr);
\draw (b4) [fill=white] circle (\vr);
\draw (c4) [fill=white] circle (\vr);
\draw (d4) [fill=white] circle (\vr);
\draw[dotted] (e4) [fill=white] circle (\vr);

\draw[dotted] (a5) [fill=white] circle (\vr);
\draw[dotted] (b5) [fill=white] circle (\vr);
\draw[dotted] (c5) [fill=white] circle (\vr);
\draw[dotted] (d5) [fill=white] circle (\vr);
\draw[dotted] (e5) [fill=white] circle (\vr);

\draw[dotted] (a6) [fill=white] circle (\vr);
\draw[dotted] (b6) [fill=white] circle (\vr);
\draw[dotted] (c6) [fill=white] circle (\vr);
\draw[dotted] (d6) [fill=white] circle (\vr);
\draw[dotted] (e6) [fill=white] circle (\vr);

\draw[dotted] (a7) [fill=white] circle (\vr);
\draw[dotted] (b7) [fill=white] circle (\vr);
\draw[dotted] (c7) [fill=white] circle (\vr);
\draw[dotted] (d7) [fill=white] circle (\vr);
\draw[dotted] (e7) [fill=white] circle (\vr);

\draw[dotted] (a8) [fill=white] circle (\vr);
\draw[dotted] (b8) [fill=white] circle (\vr);
\draw[dotted] (c8) [fill=white] circle (\vr);
\draw[dotted] (d8) [fill=white] circle (\vr);
\draw[dotted] (e8) [fill=white] circle (\vr);


\draw[anchor = north] (1) node {1};
\draw[anchor = north] (2) node {2};
\draw[anchor = north] (3) node {3};
\draw[anchor = north] (4) node {4};
\draw[anchor = north] (5) node {5};
\draw[anchor = north] (6) node {6};
\draw[anchor = north] (7) node {7};
\draw[anchor = north] (8) node {8};

\draw[anchor = east] (a) node {1};
\draw[anchor = east] (b) node {2};
\draw[anchor = east] (c) node {3};
\draw[anchor = east] (d) node {4};
\draw[anchor = east] (e) node {5};

\draw (3.5,-1.25) node {{\small (a) Polluted $8\times 5$ grid with $k=24$}};


\begin{scope}[shift={(10,0)}]

    \path (0,0.2) coordinate (1);
    \path (1,0.2) coordinate (2);
    \path (2,0.2) coordinate (3);
    \path (3,0.2) coordinate (4);
    \path (4,0.2) coordinate (5);
    \path (5,0.2) coordinate (6);
    \path (6,0.2) coordinate (7);
    \path (7,0.2) coordinate (8);

    \path (0,1) coordinate (a1);
    \path (0,2) coordinate (b1);
    \path (0,3) coordinate (c1);
    \path (0,4) coordinate (d1);
    \path (0,5) coordinate (e1);

    \path (1,1) coordinate (a2);
    \path (1,2) coordinate (b2);
    \path (1,3) coordinate (c2);
    \path (1,4) coordinate (d2);
    \path (1,5) coordinate (e2);

    \path (2,1) coordinate (a3);
    \path (2,2) coordinate (b3);
    \path (2,3) coordinate (c3);
    \path (2,4) coordinate (d3);
    \path (2,5) coordinate (e3);

    \path (3,1) coordinate (a4);
    \path (3,2) coordinate (b4);
    \path (3,3) coordinate (c4);
    \path (3,4) coordinate (d4);
    \path (3,5) coordinate (e4);

    \path (4,1) coordinate (a5);
    \path (4,2) coordinate (b5);
    \path (4,3) coordinate (c5);
    \path (4,4) coordinate (d5);
    \path (4,5) coordinate (e5);

    \path (5,1) coordinate (a6);
    \path (5,2) coordinate (b6);
    \path (5,3) coordinate (c6);
    \path (5,4) coordinate (d6);
    \path (5,5) coordinate (e6);

    \path (6,1) coordinate (a7);
    \path (6,2) coordinate (b7);
    \path (6,3) coordinate (c7);
    \path (6,4) coordinate (d7);
    \path (6,5) coordinate (e7);

    \path (7,1) coordinate (a8);
    \path (7,2) coordinate (b8);
    \path (7,3) coordinate (c8);
    \path (7,4) coordinate (d8);
    \path (7,5) coordinate (e8);

    \draw (a1)--(b1)--(c1)--(d1);
    \draw[dotted] (d1)--(e1);
    \draw (a2)--(b2)--(c2)--(d2);
    \draw[dotted] (d2)--(e2);
    \draw (a3)--(b3)--(c3)--(d3);
    \draw[dotted] (d3)--(e3);
    \draw (a4)--(b4)--(c4)--(d4);
    \draw[dotted] (d4)--(e4);
    \draw (a5)--(b5);
    \draw[dotted] (b5)--(c5)--(d5)--(e5);
    \draw[dotted] (a6)--(b6)--(c6)--(d6)--(e6);
    \draw[dotted] (a7)--(b7)--(c7)--(d7)--(e7);
    \draw[dotted] (a8)--(b8)--(c8)--(d8)--(e8);

    \draw (a1)--(a2)--(a3)--(a4)--(a5);
    \draw[dotted] (a5)--(a6)--(a7)--(a8);
    \draw (b1)--(b2)--(b3)--(b4)--(b5);
    \draw[dotted] (b5)--(b6)--(b7)--(b8);
    \draw (c1)--(c2)--(c3)--(c4);
    \draw[dotted] (c4)--(c5)--(c6)--(c7)--(c8);
    \draw (d1)--(d2)--(d3)--(d4);
    \draw[dotted] (d4)--(d5)--(d6)--(d7)--(d8);
    \draw[dotted] (e1)--(e2)--(e3)--(e4)--(e5)--(e6)--(e7)--(e8);

    \draw (a1) [fill=white] circle (\vr);
    \draw (b1) [fill=black] circle (\vr);
    \draw (c1) [fill=white] circle (\vr);
    \draw (d1) [fill=black] circle (\vr);
    \draw[dotted] (e1) [fill=white] circle (\vr);

    \draw (a2) [fill=black] circle (\vr);
    \draw (b2) [fill=white] circle (\vr);
    \draw (c2) [fill=white] circle (\vr);
    \draw (d2) [fill=white] circle (\vr);
    \draw[dotted] (e2) [fill=white] circle (\vr);

    \draw (a3) [fill=white] circle (\vr);
    \draw (b3) [fill=white] circle (\vr);
    \draw (c3) [fill=white] circle (\vr);
    \draw (d3) [fill=white] circle (\vr);
    \draw[dotted] (e3) [fill=white] circle (\vr);

    \draw (a4) [fill=black] circle (\vr);
    \draw (b4) [fill=white] circle (\vr);
    \draw (c4) [fill=white] circle (\vr);
    \draw (d4) [fill=white] circle (\vr);
    \draw[dotted] (e4) [fill=white] circle (\vr);

    \draw (a5) [fill=black] circle (\vr);
    \draw (b5) [fill=white] circle (\vr);
    \draw[dotted] (c5) [fill=white] circle (\vr);
    \draw[dotted] (d5) [fill=white] circle (\vr);
    \draw[dotted] (e5) [fill=white] circle (\vr);

    \draw[dotted] (a6) [fill=white] circle (\vr);
    \draw[dotted] (b6) [fill=white] circle (\vr);
    \draw[dotted] (c6) [fill=white] circle (\vr);
    \draw[dotted] (d6) [fill=white] circle (\vr);
    \draw[dotted] (e6) [fill=white] circle (\vr);

    \draw[dotted] (a7) [fill=white] circle (\vr);
    \draw[dotted] (b7) [fill=white] circle (\vr);
    \draw[dotted] (c7) [fill=white] circle (\vr);
    \draw[dotted] (d7) [fill=white] circle (\vr);
    \draw[dotted] (e7) [fill=white] circle (\vr);

    \draw[dotted] (a8) [fill=white] circle (\vr);
    \draw[dotted] (b8) [fill=white] circle (\vr);
    \draw[dotted] (c8) [fill=white] circle (\vr);
    \draw[dotted] (d8) [fill=white] circle (\vr);
    \draw[dotted] (e8) [fill=white] circle (\vr);


    \draw[anchor = north] (1) node {1};
    \draw[anchor = north] (2) node {2};
    \draw[anchor = north] (3) node {3};
    \draw[anchor = north] (4) node {4};
    \draw[anchor = north] (5) node {5};
    \draw[anchor = north] (6) node {6};
    \draw[anchor = north] (7) node {7};
    \draw[anchor = north] (8) node {8};

\draw (3.5,-1.25) node {{\small (b) Polluted $8\times 5$ grid with $k=22$}};

\end{scope}

\end{tikzpicture}
\end{center}

\caption{Polluted $8\times 5$ grid with $k=24$ and $k=22$, respectively, in the proof of Lemma~\ref{lem:square grids}}
\label{fig:grid large k}
\end{figure}

From the proof of Lemma~\ref{lem:square grids}, we also infer the equality between  the upper bound in Lemma~\ref{lem:square grids}, and the lower bounds in the second and third line of Lemma~\ref{lem:rectangle}. Notably
$$\left\lceil\frac{1}{2}\left\lceil 2\sqrt{t}\right\rceil\right\rceil=x
\textrm{ if } t=x^2,$$ and
$$\left\lceil\frac{1}{2}\left\lceil 2\sqrt{t}\right\rceil\right\rceil=x+1
\textrm{ if } t\ne x^2.$$
Combining this fact and earlier auxiliary results, we infer the main result of this section, and let us recall its formulation.

\noindent {\bf{Theorem 1.}}
    If $G=P_m \square P_n$ is the grid of size $m \times n$ where $2\le n \le m$, then
    \[
        m_k^{\min}(G,2) =
        \begin{cases}
          \left\lceil\frac{n+m-\left\lfloor\frac{k}{n}\right\rfloor}{2}\right\rceil, & \text{if } 1\le k \le (m-n)n; \2 \\
          \left\lceil\frac{\left\lceil 2\sqrt{mn-k}\right\rceil}{2}\right\rceil, & \text{if } (m-n)n\le k \le mn.
        \end{cases}
    \]

Note that the bound in the first row of Theorem~\ref{thm:main-grid} can be written in a different form by expressing the value that needs to be subtracted from $m(G,2)$.

\begin{remark} Let $n,m \ge 2$ and $G=P_m \square P_n$ be the Cartesian grid of size $m \times n$ where $n \le m$. If $k\le (m-n)n $ and $\ell = \lfloor\frac{k}{n}\rfloor$, then
     \[ m_k^{\min}(G,2) = \left\lceil\frac{m+n-\ell}{2}\right\rceil =
         \begin{cases}
          m(G,2) - \left\lfloor\frac{1}{2}\lfloor\frac{k}{n}\rfloor \right\rfloor, & \text{if } n+m \text{ even}; \2 \\
          m(G,2)-\left\lfloor\frac{1}{2}(\lfloor\frac{k}{n}\rfloor+1)\right\rfloor, & \text{if } n+m \text{ odd.}
        \end{cases}
    \]
\end{remark}
\begin{proof}
Consider the two cases with respect to the parity of $m+n$. If $n+m$ is even, then 
\[
m(G,2) - \left\lfloor \frac{1}{2} \left\lfloor\frac{k}{n} \right\rfloor \right\rfloor = \frac{m+n}{2} - \left\lfloor \frac{\ell}{2} \right\rfloor =  \left\lceil \frac{m+n-\ell}{2} \right\rceil.
\]
If $n+m$ is odd, then 
\[
m(G,2)-\left\lfloor \frac{1}{2} \left( \left\lfloor\frac{k}{n} \right\rfloor+1 \right) \right\rfloor = \frac{m+n+1}{2} - \left\lfloor \frac{\ell+1}{2} \right\rfloor =  \left\lceil\frac{m+n+1-(\ell+1)}{2}\right\rceil =\left\lceil\frac{m+n-\ell}{2}\right\rceil.
\]
\end{proof}

In the last result of this section, we consider the maximum possible value of $m(G-A,2)$, where $G$ is a square grid and $A$ is a subset of $V(G)$ with $|A|=k$. We restrict our discussion on the sets $A$ with $|A|\le \lceil\frac{(n-2)(m-2)}{2}\rceil$ the reason being that in this way $A$ can be an independent set in $G$ whose vertices all have degree $4$ in $G$. Note that by Theorem~\ref{thm:general} the $r$-neighbor bootstrap percolation does not decrease when an independent set is removed, and at the same time removing an independent set yields a natural lower bound on the parameter $m_k^{\max}(G,2)$ in grids $G$.

\begin{theorem}
   If $G=P_m \square P_n$ is an $m \times n$ grid and $k \le \left\lceil\frac{(n-2)(m-2)}{2}\right\rceil$, then 
   \[
   m_k^{\max}(G,2) \ge m(G,2)+k.
   \]
\end{theorem}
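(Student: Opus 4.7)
The plan is to exhibit a specific polluted set $A$ with $|A| = k$ that achieves the claimed bound, and then invoke the Bollob\'as-style perimeter argument already employed in the proof of Lemma~\ref{lem:rectangle}. The key observation is that Theorem~\ref{thm:general} only yields $m(G-A,2) \ge m(G,2)$, which is far too weak; to pick up an extra additive $k$ we need to actually compute how much the removal of $A$ \emph{enlarges} the perimeter of the underlying shape.

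First, I would choose $A$ to be any independent set of $k$ vertices lying entirely in the \emph{interior} of $G$, by which I mean the $(m-2)\times(n-2)$ subgrid consisting of the vertices of degree $4$ in $G$. Such a set exists because this interior is itself isomorphic to $P_{m-2}\cp P_{n-2}$, whose independence number is $\left\lceil (m-2)(n-2)/2\right\rceil$, and this is at least $k$ by hypothesis. (The case $n=2$ forces $k=0$ and is trivial.)

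Next, I would compute the perimeter of the shape associated with $G-A$ using Lemma~\ref{lem:4n-2e}. Since $A$ is independent and each vertex of $A$ has all four of its $G$-neighbours outside $A$, deleting $A$ removes exactly $4k$ edges of $G$. Thus $G-A$ consists of $mn-k$ unit squares with $(2mn-m-n)-4k$ shared edges, so
\[
P(G-A) \;=\; 4(mn-k) - 2\bigl(2mn-m-n-4k\bigr) \;=\; 2m+2n+4k.
\]
Geometrically, the vertices of $A$ create $k$ pairwise disjoint unit holes in the interior of the $m\times n$ rectangle, each contributing exactly $4$ to the total perimeter; disjointness of these holes is precisely what the independence of $A$ guarantees, and is the one place where the hypotheses on $A$ are used.

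Finally, I would invoke the perimeter monotonicity used already in the paper: during any $2$-neighbour bootstrap process, each newly infected square has at least two sides that immediately become internal, so the perimeter of the infected region never increases. Consequently, any $2$-percolating set $S$ of $G-A$ satisfies
\[
4|S| \;\ge\; P(S) \;\ge\; P(G-A) \;=\; 2m+2n+4k,
\]
so $|S| \ge \left\lceil (m+n)/2\right\rceil + k = m(G,2)+k$. Since $m_k^{\max}(G,2) \ge m(G-A,2)$ for every admissible $A$, the theorem follows. I do not anticipate any serious obstacle beyond the perimeter bookkeeping; the only substantive point is the clean accounting that $k$ pairwise non-adjacent interior holes contribute exactly $4k$ to the perimeter, which the perimeter argument then converts directly into $k$ additional vertices in every percolating set.
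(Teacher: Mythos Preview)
Your proposal is correct and follows essentially the same approach as the paper: choose $A$ to be an independent set of $k$ interior (degree-$4$) vertices and apply the perimeter argument, noting that each such removal raises the perimeter by exactly~$4$ and hence forces one more vertex into any percolating set. The only difference is that you spell out the perimeter computation via Lemma~\ref{lem:4n-2e} explicitly, whereas the paper's proof is a one-paragraph sketch of the same idea.
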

\begin{proof}
    Using the perimeter idea again, we deduce that whenever a vertex of degree $4$ is removed from $G$, so that the total perimeter is increased by $4$, we require at least one additional vertex in the initial infection. This is achievable by removing an independent set of vertices of degree~$4$ (removing two adjacent vertices increases the perimeter by~$6$). Since there are $(m-2)(n-2)$ vertices of degree~$4$ it is easy to see that the size of the largest independent set of such vertices is $\left\lceil\frac{(n-2)(m-2)}{2}\right\rceil$, completing the proof.
\end{proof}

\section{Concluding remarks}

In Theorem~\ref{thm:main-grid} we obtained the closed formula for $m_k^{\min}(G,2)$, where $G$ is an arbitrary Cartesian grid. For the parameter $m_k^{\max}(P_m \cp P_n,2)$ we obtained a natural lower bound expressed in terms of $m(P_m\cp P_n,2)$. We suspect the lower bound could be the exact value and we pose it as an open problem.

Note that the torus $C_m \square C_n$ is a vertex-transitive $4$-regular graph. Therefore, $m(G-x,2)=m(G-y,2)$ for any two vertices $x,y$. This implies $m_1^{\min}(G,2) = m_1^{\max}(G,2)$. We immediately infer that if $G=C_m \cp C_n$ and $x \in V(G)$, then 
\[
m(G-x,2)=m(G,2)=\left\lceil\frac{m+n}{2}\right\rceil-1.
\]
It would be interesting to determine the effect of pollution on the bootstrap percolation numbers on the torus $C_m \cp C_n$. We envision that new techniques might be needed to determine the values of $m_k^{\min}(C_m \cp C_n,2)$ and $m_k^{\max}(C_m \cp C_n,2)$.

\section*{Acknowledgements}

Bo\v{s}tjan Bre\v{s}ar was supported by the Slovenian Research and Innovation Agency (ARIS) under the grants P1-0297, N1-0285, and J1-4008. Jaka Hed\v zet was supported by the Slovenian Research and Innovation Agency (ARIS) under the grant P1-0297. Michael A. Henning was supported in part by the University of Johannesburg.



\begin{thebibliography}{99}



\bibitem{BP-1998} J.\ Balogh, G.\ Pete, Random disease on the square grid, Random Str.\ Alg.\ 13 (1998) 409--422.

\bibitem{bol-1968} B.~Bollob\'{a}s, Weakly $k$-saturated graphs, Beitr\"{a}ge zur Graphentheorie (Kolloquium, Manebach, 1967), B. G. Teubner Verlagsgesellschaft, Leipzig, 1968, 25--31.

\bibitem{Bol-2006} B.\ Bollob\'{a}s, The Art of Mathematics: Coffee Time in Memphis, Cambridge Univ.\ Press, New York, 2006.

\bibitem{cen-2010} C.C.~Centeno, S.~Dantas, M.C.~Dourado, D.~Rautenbach, J.L.~Szwarcfiter, Convex partitions of graphs induced by paths of
order three, Discrete Math.\ Theor.\ Comput.\ Sci.\ 12 (2010) 175-184.

\bibitem{cha-1979} J.~Chalupa, P.L.~Leath and G.R.~Reich, Bootstrap percolation on a Bethe latice, J.\ Phys.\ C 12 (1979) 31-35.

\bibitem{dnr-2023} P.~Dukes, J.~Noel, A.~Romer, Extremal bounds for $3$-neighbor bootstrap percolation in dimensions two and three, SIAM J.\ Discrete Math.\ 37 (2023) 2088--2125.

\bibitem{GM} J.\ Gravner, E.\ McDonald, Bootstrap percolation in a polluted environment. J.\ Stat\ Physics 87 (1997) 915--927.

\bibitem{GH-2009}J.\ Gravner, A.\ Holroyd, Local Bootstrap Percolation. Electron.\ J.\ Probab. 14 (2009) 385--399.

\bibitem{ghls-2025} J.~Gravner, A.~Holroyd, S.~Lee, D.~Sivakoff, Polluted Modified Bootstrap Percolation, arXiv:2503.15746 [math.PR], 19 Mar 2025.

\bibitem{hm-2019} I.~Hartarsky, R.~Morris,
The second term for two-neighbour bootstrap percolation in two dimensions,
Trans.\ Amer.\ Math.\ Soc.\ 372 (2019) 6465--6505.

\bibitem{hh-2025} J.~Hed\v{z}et, M.A.~Henning, $3$-neighbor bootstrap percolation on grids, Discuss.\ Math.\ Graph Theory 45 (2025) 283--310.

\bibitem{mor-2017} R.~Morris, Monotone cellular automata, Surveys in combinatorics 2017, 312–371, London Math.\ Soc.\ Lecture Note Ser.\, 440, Cambridge Univ. Press, Cambridge, 2017.



\end{thebibliography}
\end{document}